\newtheorem{theorem}{Theorem}
\newtheorem{claim}[theorem]{Claim}
\newtheorem{lemma}[theorem]{Lemma}
\newenvironment{proof}[1][Proof]{\textbf{#1.} }{\ \rule{0.5em}{0.5em}}
\begin{document}

\title{Pursuit-Evasion Games with Incomplete Information in Discrete Time}
\author{Ori Gurel-Gurevich \thanks{Weizmann Institute of Science,
Rehovot, 76100, Israel. e-mail: ori.gurel-gurevich@weizmann.ac.il}}
\maketitle

\begin{abstract}
Pursuit-Evasion Games (in discrete time) are stochastic games with
nonnegative daily payoffs, with the final payoff being the
cumulative sum of payoffs during the game. We show that such games
admit a value even in the presence of incomplete information and
that this value is uniform, i.e. there are $\epsilon$-optimal
strategies for both players that are $\epsilon$-optimal in any long
enough prefix of the game. We give an example to demonstrate that
nonnegativity is essential and expand the results to Leavable Games.
\end{abstract}

\bigskip

\bigskip\noindent\textbf{Key words: }
pursuit-evasion games, incomplete information, zero-sum stochastic
games, recursive games, nonnegative payoffs.

\section{Introduction}

Games of Pursuit and Evasion are two-player zero-sum games
involving a Pursuer (P) and an Evader (E). P's goal is to capture
E, and the game consist of the space of possible locations and the
allowed motions for P and E. These games are usually encountered
within the domain of differential games, i.e., the location space
and the allowed motions have the cardinality of the continuum and
they tend to be of differentiable or at least continuous nature.

The subject of Differential Games in general, and Pursuit-Evasion
Games in particular, was pioneered in the 50s by Isaacs (1965).
These games evolved from the need to solve military problems such
as airfights, as opposed to classical game theory which was
oriented toward solving economical problems. The basic approach
was akin to differential equations techniques and optimal control,
rather than standard game theoretic tools. The underlying
assumption was that of complete information, and optimal
\emph{pure} strategies were searched for. Conditions were given,
under which a pure strategies saddle point exists (see, for
example, Varaiya and Lin (1969)). Usually the solution was given
together with a value function, which assigned each state of the
game its value. Complete information was an essential requirement
in this case. For a thorough introduction to Pursuit-Evasion and
Differential Games see Basar and Olsder (1999).

A complete-information continuous-time game ``intuitively'' shares
some relevant features with \emph{perfect-information}
discrete-time games. The latter are games with complete knowledge
of past actions and without simultaneous actions. Indeed, if one
player decides to randomly choose between two pure strategies
which differ from time $t_0$ and on, his opponent will discover
this ``immediately'' after $t_0$, thus enabling himself to respond
optimally almost instantly. Assuming the payoff is continuous, the
small amount of time needed to discover the strategy chosen by the
opponent should affect the payoff negligibly. A well-known result
of Martin (1975, 1985) implies that every perfect-information
discrete-time game has $\epsilon$-optimal pure strategies
(assuming a Borel payoff function) and so should, in a sense,
continuous time games.

Another reason to restrict oneself to pure strategies is that
unlike discrete-time games, there is no good formal framework for
continuous-time games. By framework we mean a way to properly
define the space of pure strategies and the measurable
$\sigma$-algebra on them. There are some approaches but none is as
general or complete as for discrete-time games. This kind of
framework is essential when dealing with a general incomplete
information setting.

This paper will therefore deal with discrete-time Pursuit-Evasion
Games. We hope that our result will be applied in the future to
discrete approximations of continuous-time games. Pursuit-Evasion
Games in discrete time are formalized and discussed in Kumar and
Shiau (1981).

Pursuit-Evasion Games are generally divided into two categories:
\emph{Games of Kind} and \emph{Games of Degree}. Games of Kind
deal with the question of \emph{capturability}: whether a capture
can be achieved by the Pursuer or not. In a complete-information
setting this is a yes-or-no question, completely decided by the
rules of the game and the starting positions. With incomplete
information incorporated, we simply assign a payoff of 1 for the
event of capture and payoff 0 otherwise. Games of Degree have the
Pursuer try to minimize a certain payoff function such as the time
needed for capture. The question of capturability is encountered
here only indirectly: if the Evader have a chance of escaping
capture indefinitely, the expected time of capture is infinity.
The payoff, in general, can be any function, such as the minimal
distance between the Evader and some target set.

What unites the two categories is that the payoff function in both
is positive and cumulative. The maximizing player, be it the
Pursuer or the Evader, gains his payoff and never loses anything.
This is in contrast with other classes of infinitely repeated
games, such as undiscounted stochastic games, where the payoff is
the limit of the averages of daily payoffs.

Discrete-time stochastic games were introduced by Shapley (1953)
who proved the existence of the discounted value in two-player
zero-sum games with finite state and action sets. Recursive games
were introduced by Everett (1957). These are stochastic games, in
which the payoff is 0 except for absorbing states, when the game
terminates. Thus, absorbing states are as happens in
Pursuit-Evasion Games, where the payoff is obtained only when the
game terminates. The game is said to have a uniform value if
$\epsilon$-optimal strategies exist that are also
$\epsilon$-optimal in any long enough prefix of the game. Everett
proved the existence of the uniform value for two-player, zero-sum
recursive games.

We shall now formally define Pursuit-Evasion Games to be
two-player zero-sum games with cumulative and positive payoffs. To
avoid confusion, the players will be called the Maximizer and the
Minimizer, and their respective goals should be obvious.

Our main result is the existence of uniform value for
Pursuit-Evasion Games with incomplete-information and finite
action and signal sets, followed by a generalization for arbitrary
signal sets. In section 4 we present a different class of games to
which our proof also applies. In section 5 we show that the
positiveness requirement is indispensable by giving an appropriate
counterexample.

\section{Definitions and the main Theorem}

A \emph{cumulative game with complete information} is given by:
\begin{itemize}
\item Two finite sets $A^1$ and $A^2$ of actions.

Define $H_n=(A^1\times A^2)^n$ to be the set of all histories of
length $n$, and $H=\cup_{n=0}^\infty H_n$ to be the set of all
finite histories.

\item A daily payoff function $f:H\rightarrow\mathbb{R}$.

\end{itemize}

Let $\widetilde{H}=(A^1\times A^2)^{\aleph_0}$ be the set of all
infinite histories. The daily payoff function induces a payoff
function $\rho:\widetilde{H}\rightarrow\mathbb{R}$ by
$\rho(h)=\sum_{n=0}^\infty f(h_n)$, where $h_n$ is the length $n$
prefix of $h$. In the sequel we will only study the case in which
$f$ is nonnegative, so that $\rho$ is well defined (though it may
be infinite).

The game is played in stages as follows. The initial history is
$h_0=\emptyset$. At each stage $n\geq 0$ both players choose
simultaneously and independently actions $a\in A$ and $b\in B$,
and each player is informed of the other's choice. The new game
history is $h_{n+1}=h_n\frown<a,b>$, i.e., the concatenation of
$<a,b>$ to the current history. The infinite history of the game,
$h$, is the concatenation of all pairs of actions chosen
throughout the game. The payoff is $\rho(h)$, the goal of the
Maximizer is to maximize the expectation of $\rho(h)$, and that of
the Minimizer is to minimize it.

If all the values of $f$ are nonnegative, we call the game
\emph{nonnegative}. A \emph{complete information Pursuit-Evasion
Game} is a nonnegative cumulative game.

As cumulative games are a proper superset of recursive games (see
Everett (1957)), Pursuit-Evasion Games are a proper superset of
nonnegative recursive games.

As is standard in game theory, the term ``complete information''
is used to denote a game with complete knowledge of the history of
the game, and not the lack of simultaneous actions (which is
termed ``perfect information'').

A \emph{cumulative game with incomplete information} is given by:
\begin{itemize}
\item Two finite sets $A^1$ and $A^2$ of actions.

Define $H_n$ and $H$ as before.

\item A daily payoff function $f:H\rightarrow\mathbb{R}$.

\item Two measure spaces $S^1$ and $S^2$ of signals.

\item $\forall h\in H$ two probability distributions $p_h^1 \in
\Delta(S^1)$ and $p_h^2 \in \Delta(S^2)$.

\end{itemize}

Define $\widetilde{H}$ and $\rho$ as before. In particular, the
signals are not a parameter of the payoff function.

An incomplete-information cumulative game is played like a
complete information cumulative game, except that the players are
not informed of each other's actions. Instead, a signal pair
$<s^1,s^2>\in S^1\times S^2$ is randomly chosen with distribution
$p_h^1\times p_h^2$, $h$ being the current history of the game,
with player $i$ observing $s^i$. An \emph{incomplete-information
Pursuit-Evasion Game} is an incomplete-information nonnegative
cumulative game.

Define $H^i_n$ to be $(A^i\times S^i)^n$. This is the set of
private histories of length $n$ of player $i$. Similarly, define
$H^i=\cup_{n=0}^\infty H^i_n$, the set of all private finite
histories, and $\widetilde{H}^i=(A^i\times S^i)^{\aleph_0}$ the
set of all private infinite histories.

In a complete-information cumulative game a behavioral
\emph{strategy} for player $i$ is a function
$\sigma^i:H\rightarrow \Delta(A^i)$. In an incomplete-information
cumulative game a (behavioral) \emph{strategy} for player $i$ is a
function $\sigma^i:H^i\rightarrow\Delta(A^i)$. Recall that by
Kuhn's Theorem (Kuhn (1953)) the set of all behavioral strategies
coincides with the set of all mixed strategies, which are
probability distributions over pure strategies.

Denote the space of all behavioral strategies for player $i$ by
$\Omega^i$. A \emph{profile} is a pair of strategies, one for each
player. A profile $<\sigma^1,\sigma^2>$, together with
$\{p_h^i\}$, induces, in the obvious manner, a probability measure
$\mu_{\sigma^1,\sigma^2}$ over $\widetilde{H}$ equipped with the
product $\sigma$-algebra.

The \emph{value} of a strategy $\sigma^1$ for the Maximizer is
$val(\sigma^1)=\inf_{\sigma^2 \in \Omega^2}
E_{\mu_{\sigma^1,\sigma^2}}(\rho(h))$.

The \emph{value} of a strategy $\sigma^2$ for the Minimizer is
$val(\sigma^2)=\sup_{\sigma^1 \in \Omega^1}
E_{\mu_{\sigma^1,\sigma^2}}(\rho(h))$.

When several games are discussed we will explicitly denote the
value in game $G$ by $val_G$.

The \emph{lower value} of the game is
$\underline{val}(G)=\sup_{\sigma^1 \in \Omega^1} val(\sigma^1)$.

The \emph{upper value} of the game is
$\overline{val}(G)=\inf_{\sigma^2 \in \Omega^2} val(\sigma^2)$.

If $\underline{val}(G)=\overline{val}(G)$, the common value is the
\emph{value} of the game
$val(G)=\underline{val}(G)=\overline{val}(G)$. Observe that
$\underline{val}(G)$ and $\overline{val}(G)$ always exist, and
that $\underline{val}(G)\leq\overline{val}(G)$ always holds.

A strategy $\sigma^i$ of player $i$ is \emph{$\epsilon$-optimal}
if $|val(\sigma^i)-val(G)|<\epsilon$. A strategy is \emph{optimal}
if it is 0-optimal.

A cumulative game is \emph{bounded} if its payoff function $\rho$
is bounded, i.e. $\exists B\in\mathbb{R}\forall h \in
\widetilde{H} \ \ \ -B<\rho(h)<B$.

Let $G=<A^1,A^2,f>$ be a cumulative game. Define $f_n$ to be equal
to $f$ for all histories of length up to $n$ and zero for all
other histories. Define $G_n=<A^1,A^2,f_n>$. Thus, $G_n$ is the
restriction of $G$ to the first $n$ stages. Let $\rho_n$ be the
payoff function induced by $f_n$.

A game $G$ is said to have a \emph{uniform value} if it has a
value and for each $\epsilon>0$ there exist $N$ and two strategies
$\sigma^1,\sigma^2$ for the two players that are
$\epsilon$-optimal for every game $G_n$ with $n>N$.

The first main result is:

\begin{theorem}
Every bounded Pursuit-Evasion Game with incomplete-information and
finite signal sets has a uniform value. Furthermore, an optimal
strategy exists for the Minimizer.
\end{theorem}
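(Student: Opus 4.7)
The plan is to proceed by finite-horizon approximation. First I observe that each truncated game $G_n$ is essentially a finite game: because $f_n$ vanishes on histories of length greater than $n$ and both action and signal sets are finite, the payoff in $G_n$ depends only on the restriction of each strategy to the (finitely many) private histories of length less than $n$. Thus $G_n$ reduces to a bimatrix game, so by the classical minimax theorem it admits a value $v_n$ and both players have optimal strategies. Nonnegativity of $f$ gives $v_n\leq v_{n+1}$, while boundedness of $\rho$ by $B$ gives $v_n\leq B$, so $v_n\uparrow v^*$ for some $v^*\in[0,B]$. The whole argument then consists of showing $val(G)=v^*$ and that the limit is achieved by a single Minimizer strategy.

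The lower bound is the easy half. Given $\epsilon>0$, take an $\epsilon$-optimal Maximizer strategy $\sigma^1_n$ in $G_n$, extended arbitrarily past stage $n$. For any Minimizer strategy $\sigma^2$, nonnegativity yields $E_{\sigma^1_n,\sigma^2}[\rho]\geq E_{\sigma^1_n,\sigma^2}[\rho_n]\geq v_n-\epsilon$, hence $\underline{val}(G)\geq v_n-\epsilon$ for every $n$ and $\epsilon$, giving $\underline{val}(G)\geq v^*$.

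For the upper bound and the Minimizer's optimal strategy, let $\sigma^2_n$ be optimal in $G_n$. Since $A^2$ and $S^2$ are finite, the set of private histories $H^2$ is countable, so $\Omega^2=\prod_{h\in H^2}\Delta(A^2)$ is compact metrizable in the product topology. Extract a convergent subsequence $\sigma^2_{n_k}\to\sigma^2$. Fix any Maximizer strategy $\sigma^1$ and any $m$. The expectation $E_{\sigma^1,\sigma}[\rho_m]$ depends only on the restriction of $\sigma$ to the finitely many private histories of length less than $m$, so it is a continuous function of $\sigma\in\Omega^2$. Hence $E_{\sigma^1,\sigma^2_{n_k}}[\rho_m]\to E_{\sigma^1,\sigma^2}[\rho_m]$, and for $n_k\geq m$ nonnegativity gives $E_{\sigma^1,\sigma^2_{n_k}}[\rho_m]\leq E_{\sigma^1,\sigma^2_{n_k}}[\rho_{n_k}]\leq v_{n_k}\leq v^*$. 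Thus $E_{\sigma^1,\sigma^2}[\rho_m]\leq v^*$ for every $m$, and monotone convergence gives $E_{\sigma^1,\sigma^2}[\rho]\leq v^*$. Taking the supremum over $\sigma^1$ yields $val_G(\sigma^2)\leq v^*$, proving both $\overline{val}(G)\leq v^*$ and optimality of $\sigma^2$.

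Uniformity falls out of the quantitative estimates. Given $\epsilon>0$, choose $N$ with $v_N>v^*-\epsilon$. For every $n\geq N$, the extended Maximizer strategy $\sigma^1_N$ satisfies $val_{G_n}(\sigma^1_N)\geq v_N-\epsilon\geq val(G_n)-2\epsilon$, and the Minimizer's $\sigma^2$ satisfies $val_{G_n}(\sigma^2)\leq val_G(\sigma^2)=v^*\leq v_n+\epsilon=val(G_n)+\epsilon$; so both are uniformly good on all tails. I expect the main obstacle to be the compactness-continuity step that produces $\sigma^2$: the Maximizer side goes through essentially for free because nonnegativity lets him replace any tail by an arbitrary continuation, but the Minimizer must commit in advance to an infinite-horizon strategy, and this coordination of the $\sigma^2_n$ requires precisely the finiteness of $S^2$ to guarantee sequential compactness of $\Omega^2$, which is where the signal-finiteness hypothesis is genuinely used and where the planned extension to arbitrary signal sets will need additional work.
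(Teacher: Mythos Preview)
Your proof is correct and follows essentially the same route as the paper: reduce to the finite games $G_n$, use monotonicity of $\rho_n$ to get $v_n\uparrow v^*\leq\underline{val}(G)$, and then exploit compactness of $\Omega^2$ together with continuity of the finite-horizon payoffs and monotone convergence to produce an optimal Minimizer strategy witnessing $\overline{val}(G)\leq v^*$. The only cosmetic difference is in how the compactness step is packaged: you extract a subsequential limit of optimal strategies $\sigma^2_{n_k}\to\sigma^2$, whereas the paper defines the nested compact sets $K_n=\{\sigma^2:val_{G_n}(\sigma^2)\leq v^*\}$ and takes any $\sigma^2\in\bigcap_n K_n$; both arguments rest on exactly the same continuity and compactness facts.
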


\begin{proof}
Let $G$ be a bounded Pursuit-Evasion Game with
incomplete-information . Let $G_n$ be defined as above. Since
$A^1,A^2,S^1,S^2$ are all finite, there are only a finite number
of private histories of length up to $n$. $G_n$ is equivalent to a
finite-stage finite-action game, and therefore it has a value
$v_n$. From the definition of $G_n$ and since $f$ is nonnegative
\[
\forall h\in\widetilde{H} \ \
\rho_n(h)\leq\rho_{n+1}(h)\leq\rho(h)
\]
which implies that for all $\sigma^1\in\Omega^1$
\begin{equation}
val_{G_n}(\sigma^1)\leq val_{G_{n+1}}(\sigma^1) \leq
val_G(\sigma^1)
\label{eq0}
\end{equation}
so that
\[
\underline{val}(G_n)\leq \underline{val}(G_{n+1}) \leq
\underline{val}(G)  .
\]

Therefore, $v_n$ is a nondecreasing bounded sequence and
$\underline{val}(G)$ is at least $v=\lim_{n\rightarrow\infty}v_n$.

On the other hand, define $K_n=\{\sigma^2\in\Omega^2 \mid
val_{G_n}(\sigma^2)\leq v\}$. Since $val(G_n)=v_n\leq v$, $K_n$
cannot be empty.

$K_n$ is a compact set, since the function $val_{G_n}(\sigma^2)$
is continuous over $\Omega^2$, which is compact, and $K_n$ is the
preimage of the closed set $(-\infty,v]$.

For all $\sigma^2\in\Omega^2$ $val_{G_n}(\sigma^2)\leq
val_{G_{n+1}}(\sigma^2)$, so that  $K_n\supseteq K_{n+1}$. Since
the sets $K_n$ are compact, their intersection is nonempty.

Let $\sigma^2$ be a strategy for the Minimizer in
$\cap_{n=0}^\infty K_n$. Let $\sigma^1$ be any strategy for the
Maximizer. From $\rho(h)=\lim_{n\rightarrow\infty}\rho_n(h)$ and
since $\rho$ is bounded, we get by the monotone convergence
Theorem
\[
E_{\mu_{\sigma^1,\sigma^2}}(\rho(h))
=\lim_{n\rightarrow\infty}E_{\mu_{\sigma^1,\sigma^2}}(\rho_n(h)) .
\]
Since $\sigma^2$ belongs to $K_n$,
$E_{\mu_{\sigma^1,\sigma^2}}(\rho_n(h))\leq v$ and therefore
$E_{\mu_{\sigma^1,\sigma^2}}(\rho(h))\leq v$. Since $\sigma^1$ is
arbitrary $val(\sigma^2) \leq v$, so that $\overline{val}(G) \leq
v$. Consequentially, $v$ is the value of $G$.

Notice that any $\sigma^2 \in \cap_{n=0}^\infty K_n$ has
$val_G(\sigma^2)=v$ and is therefore an optimal strategy for the
Minimizer.

Given $\epsilon>0$ choose $N$ such that $v_{N}>v-\epsilon$. Let
$\sigma^1$ be an optimal strategy for the Maximizer in $G_N$, and
let $\sigma^2 \in \cap_{n=0}^\infty K_n$. By (\ref{eq0})
\[
\forall n>N \ \ v_n-\epsilon\leq v-\epsilon <
v_N=val_{G_N}(\sigma^1)\leq val_{G_n}(\sigma^1)
\]
so that $\sigma^1$ is $\epsilon$-optimal in $G_n$. As $\sigma^2\in
K_n$ one has $val_{G_n}(\sigma^2)\leq v < v_n+\epsilon$ so that
$\sigma^2$ is $\epsilon$-optimal in $G_n$.

These strategies are $\epsilon$-optimal in all games $G_n$ for
$n>N$. Thus, the value is uniform.
\end{proof}

Remark: Most of the assumption on the game $G$ are irrelevant for
the proof of the theorem and were given only for the simplicity of
description.

\begin{enumerate}
\item The action sets $A^i$ and the signal sets $S^i$ may depend
respectively on the private histories $H^i_n$.

\item The signals $<s^1,s^2>$ may be correlated, i.e. chosen from
a common distribution $p_h \in \Delta(S^1 \times S^2)$.

\item The game can be made stochastic simply by adding a third
player, Nature, with a known behavioral strategy. The action set
for Nature can be countable, since it could always be approximated
by large enough finite sets. The action sets for the Maximizer can
be infinite as long as the signals set $S^2$ is still finite (so
the number of pure strategies for the Minimizer in $G_n$ is still
finite).

\item Since the bound on payoffs was only used to bound the values
of $G_n$, one can drop the boundedness assumption, as long as the
sequence $\{v_n\}$ is bounded. If they are unbounded then $G$ has
infinite uniform value in the sense that the Maximizer can achieve
as high a payoff as he desires.
\end{enumerate}

\section{Arbitrary signal sets}

Obviously, the result still hold if we replace the signal set $S$ by
a sequence of signal sets $S_n$, all of which are finite, such that
the signals for histories of length $n$ belong to $S_n$. The signal
sets, like the action sets can change according to past actions, but
since there are only finitely many possible histories of length $n$,
this is purely semantical.

What about signals chosen from an infinite set? If the set $S$ is
countable than we can approximate it with finite sets $S_n$, chosen
such that for any history $h$ of length $n$ the chance we get a
signal outside $S_n$ is negligible. We won't go into details because
the next argument applies for both the countable and the uncountable
cases.

A cumulative game $G$ is \emph{$\epsilon$-approximated} by a game
$G'$ if $G'$ has the same strategy spaces as $G$ and for any pair
of strategies $\sigma,\tau$
\[
|\rho_G(\sigma,\tau)-\rho_{G'}(\sigma,\tau)|<\epsilon  .
\]
\begin{lemma}
If $G$ is a bounded Pursuit-Evasion Game with incomplete
information then $G$ can be $\epsilon$-approximated by a Pursuit
Evasion Game with incomplete information with the same action sets
and payoffs which can be simulated using a sequence of finite
signal sets.
\end{lemma}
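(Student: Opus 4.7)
The plan is to build $G'$ inductively over stages, replacing each signal distribution $p_h^i$ by a finite-support approximation $p_h^{\prime i}$ and controlling the cumulative error by a summable sequence of per-stage tolerances, exploiting the bound $B$ on $\rho$ to turn total-variation distance between the induced path measures into a payoff approximation. Concretely, fix a bound $B$ on $\rho$ in $G$ and a summable sequence $\{\delta_n\}_{n\ge 0}$ with $\sum_n\delta_n<\epsilon/(4B)$, e.g.\ $\delta_n=\epsilon/(2^{n+3}B)$.

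Inductively in $n$: suppose finite-support approximations $p_{h'}^{\prime i}$ have been defined for every history $h'$ of length $<n$. Because $A^1,A^2$ are finite and each previously-chosen signal distribution has finite support, only finitely many length-$n$ histories $h$ are reachable in $G'$ with positive probability. For each such reachable $h$ and each player $i$, choose a finite measurable partition of $S^i$ and let $p_h^{\prime i}$ place mass $p_h^i(C)$ on a representative of each cell $C$, fine enough that the per-stage comparison is controlled by $\delta_n$. For unreachable $h$ the choice is immaterial. The resulting collection of signals actually used at stage $n$ is a union of finitely many finite supports, hence finite, so $G'$ is simulable by a sequence of finite signal sets $\{S_n^i\}$.

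For the error bound, let $\sigma^1,\sigma^2$ be any strategies and $\mu,\mu'$ the induced measures on $\widetilde H$ under $G,G'$. A standard chain-rule/coupling argument yields $\|\mu_n-\mu_n'\|_{\mathrm{TV}}\le 2\sum_{k<n}\delta_k$ for the $n$-step marginals, and letting $n\to\infty$ together with $0\le\rho\le B$ gives $|\rho_G(\sigma^1,\sigma^2)-\rho_{G'}(\sigma^1,\sigma^2)|\le 2B\sum_k\delta_k<\epsilon/2<\epsilon$. The main obstacle I anticipate is the per-stage approximation itself: when $p_h^i$ is atomless, no finite-support measure is close to it in total variation, and an arbitrary measurable strategy can in principle exploit the fine structure of $S^i$. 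The resolution is to view $G'$ as the cell-coarsening of $G$ and to interpret the ``same strategy space'' clause of the definition via the natural identification of $G'$-strategies with cell-measurable restrictions of $G$-strategies; making this precise, while still controlling the error uniformly in the strategy pair, is the delicate technical heart of the proof.
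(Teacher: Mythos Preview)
Your proposal has a genuine gap, and you have already located it: when $p_h^i$ is atomless, no finite-support measure is close to it in total variation, so the instruction ``choose a partition fine enough that the per-stage comparison is controlled by $\delta_n$'' cannot be carried out. Your fallback---reinterpreting ``same strategy space'' to mean cell-measurable strategies---does not close the gap. The definition of $\epsilon$-approximation requires literally identical strategy spaces, and if you relax this you are left having to prove that restricting each player to cell-measurable strategies changes the payoff by at most $\epsilon$ uniformly over the opponent's strategy. That is not a formality: an \emph{arbitrary} fine partition of $S^i$ can destroy information that a strategy actually uses, so this step is essentially the whole content of the lemma. (Incidentally, since $H_n=(A^1\times A^2)^n$ and the action sets are finite, there are only finitely many histories of each length regardless of reachability; the inductive reduction to ``reachable'' histories is unnecessary.)

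The paper supplies exactly the idea your argument is missing: one must coarsen through a \emph{near-sufficient statistic}, not an arbitrary partition. Form the reference measure $p_n^i=|H_n|^{-1}\sum_{h\in H_n}p_h^i$; each $p_h^i$ has a bounded Radon--Nikodym density $f_h^i$ with respect to it. Round each $f_h^i$ down to a grid of mesh $\epsilon_n$, and let the new signal be the finite-valued vector $s\mapsto\bigl(f'^i_h(s)\bigr)_{h\in H_n}$. This vector determines, up to $\epsilon_n$, the likelihoods of all length-$n$ histories given the signal, so almost no usable information is discarded. Formally, the coarsened game $G'$ is equivalent to a game $G''$ on the \emph{original} signal space $S^i$ in which one first draws the coarse signal and then resamples uniformly (with respect to $p_n^i$) from its preimage; the resulting density is the cell-average of $f_h^i$, hence within $\epsilon_n$ of $f_h^i$ pointwise, so the perturbed law is within $\epsilon_n$ of $p_h^i$ in total variation. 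Now $G$ and $G''$ genuinely share strategy spaces, and your chain-rule bound goes through. A generic partition fails precisely because it need not be close to sufficient; the level sets of the quantized density vector always are.
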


\begin{proof}
Let $G$ be such a game. Assume, w.l.o.g., that the payoff function
$\rho$ is bounded by 1. Fix a positive $\epsilon$. Let
$\epsilon_n=\epsilon/{2^n}$. Define $p_n^i=\sum_{h\in
H_n}p_h^i/|H_n|$, the mean distribution of the signals at stage
$n$. Every distribution $p_h^i$ of time $n$ is absolutely
continuous with respect to $p_n^i$. By Radon-Nykodim theorem, a
density function $f_h^i$ exists such that $p_h^i(E)=\int_E f_h^i
dp_n^i$. Clearly, $f_h^i$ is essentially bounded by $|H_n|$.

Let $S'^i_n$ be $\{0,\epsilon_n,2\epsilon_n,3\epsilon_n,...,
\lfloor |H_n|/\epsilon_n \rfloor \epsilon_n\}^{|H_n|}$. For $h\in
H_n$ define $f'^i_h$ to be $f_h^i$ rounded down to the nearest
multiple of $\epsilon_n$. Define $F'^i_n:S^i\rightarrow S'^i_n$ by
$F'^i_n(s)=\{f'^i_h(s)\}_{h\in H_n}$. Let $G'$ be the same game as
$G$ except that the players observe the signals $F'^i_n(s^i)\in
S'^i_n$ where $s^i$ is the original signal with density $f^i_h$.

Given a signal $s'^i$ in $S'^i_n$ one can project it back onto
$S^i$ by choosing from a uniform distribution (with respect to the
measure $p_n^i$) over the set $E(s'^i)={F'^i_n}^{-1}(s'^i)$. Let
$G''$ be the game $G$ except that the signals are chosen with the
distribution just described. Denote their density function by
$f''^i_h$. This game can be simulated using only the signals in
$G'$ and vice versa so they are equivalent.

$G$ and $G''$ have exactly the same strategy spaces. The only
difference is a different distribution of the signals. But the way
the signals in $G''$ were constructed it is obvious that the
density function $f''^i_h$ do not differ from $f^i_h$ by more than
$\epsilon_n$ for any history $h$ of length $n$. Given a profile
$<\sigma^1,\sigma^2>$ denote the generated distributions on
$\widetilde{H}$ in $G$ and $G''$ by $\mu$ and $\mu''$. The payoffs
are $\rho_G(\sigma^1,\sigma^2)=\int \rho d\mu$ and
$\rho_{G''}(\sigma^1,\sigma^2)=\int \rho d\mu''$ . But the
distance, in total variation metric, between $\mu$ and $\mu''$
cannot be more than the sum of distances between the distributions
of signals at each stage, which is no more than $\sum_{i=1}^\infty
\epsilon_i=\epsilon$. By definition of total variation metric, the
difference between $\int \rho d\mu$ and $\int \rho d\mu''$ cannot
be more than $\epsilon$.
\end{proof}

\begin{theorem}
If $G$ is as in lemma and have bounded nonnegative payoffs, it has
a uniform value.
\end{theorem}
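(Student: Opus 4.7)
My plan is to bootstrap from Theorem 1 using the $\delta$-approximation supplied by the lemma, then take $\delta\to 0$.

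Fix $\delta>0$ and apply the lemma to $G$ to obtain a game $G_\delta$ sharing the action sets, strategy spaces and payoff function of $G$, played with a sequence of finite signal sets, and satisfying $|\rho_G(\sigma^1,\sigma^2)-\rho_{G_\delta}(\sigma^1,\sigma^2)|<\delta$ for every profile $(\sigma^1,\sigma^2)$. By the remark following Theorem 1 (which permits stage-dependent finite signal sets), Theorem 1 applies to $G_\delta$: it has a uniform value $v_\delta$, and for every $\eta>0$ there are strategies $\sigma^1_{\delta,\eta},\sigma^2_{\delta,\eta}$ that are $\eta$-optimal in every prefix $(G_\delta)_n$ for $n$ exceeding some threshold $N(\delta,\eta)$; in particular each $(G_\delta)_n$ has a value $v_{\delta,n}$.

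Next I would revisit the lemma's proof: the estimate it derives is really a total-variation bound on the laws on $\widetilde{H}$ induced by any profile in $G$ versus $G_\delta$, not just on the integral of $\rho$. Since every $\rho_n$ is dominated by the same constant as $\rho$, the same total-variation bound automatically yields
\[
|\rho_{G,n}(\sigma^1,\sigma^2)-\rho_{G_\delta,n}(\sigma^1,\sigma^2)|<\delta
\]
uniformly in $n$ and in the profile. Consequently $|\underline{val}(G_n)-v_{\delta,n}|\le\delta$ and $|\overline{val}(G_n)-v_{\delta,n}|\le\delta$; letting $\delta\to 0$ forces $\underline{val}(G_n)=\overline{val}(G_n)$, so each $G_n$ acquires a value $v_n$ with $|v_n-v_{\delta,n}|\le\delta$. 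The identical argument applied to the untruncated game shows $G$ has a value $v$ with $|v-v_\delta|\le\delta$.

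Finally, for uniformity, given $\epsilon>0$ I would set $\delta=\epsilon/4$ and consider the profile $(\sigma^1_{\delta,\delta},\sigma^2_{\delta,\delta})$ with its threshold $N(\delta,\delta)$. For $n>N(\delta,\delta)$ and either player $i$, combining the three inequalities $|val_{G_n}(\sigma^i_{\delta,\delta})-val_{(G_\delta)_n}(\sigma^i_{\delta,\delta})|\le\delta$ (prefix approximation of payoffs), $|val_{(G_\delta)_n}(\sigma^i_{\delta,\delta})-v_{\delta,n}|\le\delta$ (uniform value of $G_\delta$), and $|v_{\delta,n}-v_n|\le\delta$ (prefix approximation of values) yields $|val_{G_n}(\sigma^i_{\delta,\delta})-v_n|\le 3\delta<\epsilon$, as required; the parallel estimate with $v$ in place of $v_n$ handles $G$ itself. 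The only non-routine step is the second one: verifying that the lemma's estimate controls \emph{all} the prefix payoffs $\rho_n$ simultaneously, which is why I had to revisit the lemma's proof rather than just its statement. Once this uniform prefix-approximation is in hand, the rest is a straightforward Cauchy-style limit argument.
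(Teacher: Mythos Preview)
Your proof is correct and follows the same route as the paper: approximate $G$ by a finite-signal game via the lemma, apply Theorem~1 there, and transfer the (uniform) value back through the $\delta$-closeness of payoffs. You are in fact more careful than the paper on the uniformity step---the paper disposes of it with a single ``Clearly''---and your observation that the lemma really delivers a total-variation bound (hence controls every prefix payoff $\rho_n$ simultaneously) is precisely what justifies that step.
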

\begin{proof}
Let $G$ be such a game, and for any $\epsilon$ let $G_\epsilon$ be
an $\epsilon$-approximation of $G$ produced by the lemma.
$G_\epsilon$ is equivalent to a game with finite signal sets and
therefore has a value according to Theorem 1, denoted
$v_\epsilon$. It is immediate from the definition of
$\epsilon$-approximation that $\underline{v}$, the lower value of
$G$ cannot be less than $v_\epsilon - \epsilon$, and likewise
$\overline{v}$ is no more than $v_\epsilon + \epsilon$.
$\overline{v}-\underline{v}$ is therefore less than $2\epsilon$.
But $\epsilon$ was chosen arbitrarily, so that
$\overline{v}=\underline{v}$.

Given $\epsilon>0$ let $\sigma^1$ and $\sigma^2$ be
$\epsilon/2$-optimal strategies in $G_{\epsilon/2}$ that are also
$\epsilon/2$-optimal in any prefix of $G_{\epsilon/2}$ longer than
$N$. Clearly, these strategies are $\epsilon$-optimal in any $G_n$
with $n>N$. Thus, the value is uniform.
\end{proof}

\section{Leavable games}

Leavable games are cumulative games in which one of the players,
say the Maximizer, but not his opponent is allowed to leave the
game at any stage. The obvious way to model this class of games
would be to add a ``stopping'' stage between any two original
stages, where the Maximizer will choose to either ``stop'' or
``continue'' the game. However, we would also like to force the
Maximizer to ``stop'' at some stage. Unfortunately, it is
impossible to do so and still remain within the realm of
cumulative games, so we will have to deal with it a bit
differently.

Leavable games were introduced by Maitra and Sudderth (1992) as an
extension to similar concepts in the theory of gambling. They
proved that a leavable game with complete information and finite
action sets has a value. We will prove that the same is true for
leavable games with incomplete information.

Let $G$ be a cumulative game with incomplete information. A
\emph{stop rule} for player $i$ is a function
$s:\widetilde{H}^i\rightarrow \mathbb{N}$ such that if $s(h)=n$
and $h'$ coincides with $h$ in the first $n$ coordinates, then
$s(h')=n$. A \emph{leavable game with incomplete information}
$L(G)$ is given by a cumulative game with incomplete information
$G$ but is play differently, as follows. Instead of playing in
stages, both players choose their behavioral strategies
simultaneously with the Maximizer also choosing a stop rule $s$.
The game is played according to these strategies and the payoff is
$\rho(h^1)=\sum_{i=0}^{s(h^1)} f(h_n)$ where $h^1$ is the
Maximizer's private infinite history.

\begin{theorem}
A bounded leavable game with incomplete information and finite
signal sets has a value and that value is uniform. Furthermore, an
optimal strategy exists for the Minimizer.
\end{theorem}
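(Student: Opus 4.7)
The plan is to mirror the proof of Theorem~1, replacing the time-truncation $G_n$ by a stop-rule truncation. I would define $L(G)_n$ to be the leavable game identical to $L(G)$ except that the Maximizer is restricted to stop rules $s$ with $s(h^1)\le n$ for every $h^1$. Since the payoff then depends only on what happens during the first $n$ stages and there are only finitely many stop rules bounded by $n$, $L(G)_n$ collapses to a finite game and has a value $v_n$. The Minimizer's strategy space $\Omega^2$ is unchanged (he never stops), and any Maximizer pair $(\sigma^1,s)$ admissible in $L(G)_n$ remains admissible in $L(G)_{n+1}$ and in $L(G)$ with the same expected payoff. Exactly as in Theorem~1, this yields $v_n\le v_{n+1}\le\underline{val}(L(G))$, so $v_n$ converges to some $v\le\underline{val}(L(G))$.

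For the matching upper bound, set $K_n=\{\sigma^2\in\Omega^2 : val_{L(G)_n}(\sigma^2)\le v\}$. These are nonempty (since $v_n\le v$), compact as preimages of a closed set under the continuous function $val_{L(G)_n}$ on the compact product space $\Omega^2$, and nested because enlarging the family of admissible stop rules can only raise $val_{L(G)_n}(\sigma^2)$. Pick any $\sigma^2\in\bigcap_n K_n$. Given an arbitrary Maximizer pair $(\sigma^1,s)$ in $L(G)$, the key move is to truncate the stop rule by $s_n(h^1)=\min(s(h^1),n)$. Granting that $s_n$ is itself a valid stop rule (verified below), $(\sigma^1,s_n)$ is admissible in $L(G)_n$, so $E_{\mu_{\sigma^1,\sigma^2}}\bigl[\sum_{i=0}^{s_n(H^1)}f(H_i)\bigr]\le v$. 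Since every $s(h^1)$ is finite we have $s_n\uparrow s$ pointwise, and nonnegativity of $f$ makes the sums increase monotonically in $n$ to $\sum_{i=0}^{s(H^1)}f(H_i)$. Monotone convergence then gives $val_{L(G)}(\sigma^2)\le v$, so $\overline{val}(L(G))\le v$, $val(L(G))=v$, and $\sigma^2$ is optimal. The uniform-value part follows exactly as in Theorem~1: for $\epsilon>0$, pick $N$ with $v_N>v-\epsilon$ and an optimal Maximizer pair $(\sigma^1,s)$ in $L(G)_N$; that pair is admissible and $\epsilon$-optimal in every $L(G)_n$ with $n>N$, while $\sigma^2$ is $\epsilon$-optimal there as well.

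The step that needs most care is the verification that $s_n$ really satisfies the stop-rule axiom. If $s_n(h^1)=k<n$ this is inherited directly from $s$. If $s_n(h^1)=n$, i.e.\ $s(h^1)>n$, consider any $g^1$ agreeing with $h^1$ on the first $n$ coordinates; one must have $s(g^1)\ge n$, since otherwise setting $k=s(g^1)<n$ the stop-rule property of $s$ applied at $g^1$ would force $s(h^1)=k<n$, contradicting $s(h^1)>n$. Hence $s_n(g^1)=n=s_n(h^1)$. Once this is in place the argument is a routine translation of Theorem~1; nonnegativity of $f$ is precisely what keeps both the monotonicity of $v_n$ and the monotone convergence step alive, so the proof would not survive dropping that assumption.
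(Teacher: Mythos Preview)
Your argument is correct, but the paper reaches the upper bound $\overline{val}(L(G))\le v$ by a shorter route that avoids truncation altogether. The paper's key observation is that when $A^1$ and $S^1$ are finite, \emph{every} stop rule $s:\widetilde{H}^1\to\mathbb{N}$ is already uniformly bounded: the sets $\{s>n\}$ are nested, nonempty, and clopen in the compact space $(A^1\times S^1)^{\aleph_0}$, so if they were all nonempty their intersection would be too, contradicting $s<\infty$. Hence any Maximizer pair $(\sigma^1,s)$ in $L(G)$ already lies in some $L(G)_n$, and $val_{L(G)}(\sigma^2)=\sup_n val_{L(G)_n}(\sigma^2)\le v$ follows immediately for $\sigma^2\in\bigcap_n K_n$, with no limit argument required.

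Your route via $s_n=\min(s,n)$ and monotone convergence is a genuine alternative. It is slightly heavier here, and it leans on nonnegativity of $f$ for the monotone convergence step (a hypothesis the theorem statement does not actually impose; the paper's argument for leavable games needs only boundedness, since monotonicity of $v_n$ comes from strategy-space inclusion rather than from $f\ge 0$). On the other hand, your approach would survive the passage to infinite $S^1$, where stop rules need not be uniformly bounded and the paper's compactness shortcut is unavailable. One small slip: ``$s_n(h^1)=n$, i.e.\ $s(h^1)>n$'' should read $s(h^1)\ge n$; the case $s(h^1)=n$ is handled directly by the stop-rule axiom, so the argument is unaffected.
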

\begin{proof}
The proof is essentially identical to the proof of Theorem 1.
$L_n$ is Defined to be the game where the Maximizer is forced to
choose a stop rule $\leq n$. $L_n$ is thus equivalent to $G_n$ in
the proof of Theorem 1.

The major point we should observe is that if $A^1$ and $S^1$ are
finite, any stop rule $s:\widetilde{H}^1\rightarrow \mathbb{N}$ is
uniformly bounded: $\exists B \forall h\in\widetilde{H}^1 \ \
s(h)<B$. This implies that any pure strategy for the Maximizer in
$L$ actually belongs to some $L_n$. Therefore, a strategy
$\sigma^2$ for the Minimizer with $val_{L_n}(\sigma^2)\leq v$ for
all $n$, has $val_L(\sigma^2)\leq v$.
\end{proof}

\section{Counterexamples}

The question arises whether positiveness is an essential or just a
technical requirement. Both our proof and the alternative proof
outlined need the positiveness in an essential way, but still is
it possible that every cumulative game have a value?

The answer is Negative. We shall provide a simple counterexample
of a cumulative game (actually a stopping game, see Dynkin (1969))
with incomplete information without a value.

The game is as follows: at the outset of the game a bit (0 or 1)
$b$ is chosen randomly with some probability $p>0$ to be 1 and
probability $1-p$ to be 0. the Maximizer is informed of the value
of $b$ but not the Minimizer. Then the following game is played.
At each odd stage the Maximizer may opt to ``stop'' the game and
the payoff is -1 if $b=0$ and 1 if $b=1$. At each even stage the
Minimizer may opt to ``stop'' the game and the payoff is -1 if
$b=0$ and some $A>\frac{1}{p}$ if $b=1$.

The payoff before and after someone decides to ``stop'' the game
is zero.

This is a very simple stopping game with only one ``unknown''
parameter, yet, as we now argue, it has no value.

\begin{claim}
The upper value of this game is $p$
\end{claim}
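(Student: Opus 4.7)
The plan is to prove $\overline{val}\leq p$ and $\overline{val}\geq p$ separately.

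For the upper bound I use as a test Minimizer strategy the pure strategy ``never stop''. Knowing $b$, the Maximizer's best response is to stop at stage $1$ exactly when $b=1$, yielding payoff $+1$ with probability $p$ and $0$ with probability $1-p$. Hence $\sup_{\sigma^1} E(\sigma^1,\sigma^2_{\text{never}})=p$, and so $\overline{val}\leq p$.

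For the matching lower bound I fix an arbitrary Minimizer strategy $\sigma^2$ and exhibit a family of Maximizer strategies $\{\sigma^1_N\}_{N\geq 1}$ defined as follows: at every odd stage $\leq 2N-1$ play ``continue'' regardless of $b$; at stage $2N+1$ stop iff $b=1$; and at every later odd stage play ``continue''. The key observation is that because $\sigma^1_N$ is deterministic and independent of $b$ during stages $1,\ldots,2N$, the Minimizer's observations over that period carry no information about $b$, and her stopping time $T$, taking values in $\{2,4,6,\ldots\}\cup\{\infty\}$, has a law determined by $\sigma^2$ alone and independent of $N$.

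Let $q_N=\Pr[T\leq 2N]$ and $q_\infty=\Pr[T<\infty]$. Conditioning on $b$: when $b=1$ the payoff is $A$ if the Minimizer stops first (probability $q_N$) and $1$ otherwise (the Maximizer stops at $2N+1$); when $b=0$ the Maximizer never stops, so the payoff is $-1$ if the Minimizer ever stops and $0$ otherwise. Combining,
\[
E(\sigma^1_N,\sigma^2) \;=\; p\bigl(q_N A+(1-q_N)\bigr)-(1-p)\,q_\infty \;=\; p+p\,q_N(A-1)-(1-p)\,q_\infty .
\]
As $N\to\infty$, $q_N$ increases to $q_\infty$, so the right-hand side tends to $p+q_\infty(pA-1)$, which is $\geq p$; this is exactly where the hypothesis $A>1/p$ is used. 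Therefore $\sup_{\sigma^1}E(\sigma^1,\sigma^2)\geq p$ for every $\sigma^2$, and $\overline{val}\geq p$.

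The delicate step I would write up most carefully is the invariance of the law of $T$ across the strategies $\sigma^1_N$; it relies on the essentially signal-free structure of this game, so that the only information potentially reaching the Minimizer during the initial $2N$ stages is the Maximizer's sequence of actions, which under $\sigma^1_N$ is a deterministic string of continuations identical across all $N$.
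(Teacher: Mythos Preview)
Your proof is correct and follows the same approach as the paper: the ``never stop'' Minimizer strategy gives $\overline{val}(G)\le p$, and for the lower bound the Maximizer delays stopping (only when $b=1$) to a late odd stage, exploiting $pA>1$. The only differences are cosmetic---you pass to the limit $N\to\infty$ where the paper fixes $N$ from a given $\epsilon$, and you are more explicit than the paper about why the Minimizer's stopping-time law is determined by $\sigma^2$ alone.
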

\begin{proof}
To see that $\overline{val}(G)\leq p$ let the Minimizer's strategy
be to continue at all stages. The Maximizer cannot gain more than
$p 1 + (1-p) 0=p $ against this strategy, so the upper value
cannot be higher than $p$.

On the other hand, let $\sigma$ be a strategy for the Minimizer.
It consists of $\{\sigma_i\}_{i=1}^\infty$ the probabilities of
stopping at stage $i$ and $\sigma_\infty=1-\sum_{i=1}^\infty
\sigma_i$ the probability of never choosing ``stop''.

Fix $\epsilon>0$ and let $N$ be an odd integer such that
$\sum_{i=N+1}^\infty \sigma_i < \epsilon$. Let $\tau$ be the
following strategy for the Maximizer: if $b=0$ never stop, if
$b=1$ stop at stage $N$. The payoff under $<\sigma,\tau>$ is:
\[
p\sum_{i=1}^N \sigma_i A + p (\sum_{i=N+1}^\infty \sigma_i +
\sigma_\infty) 1 + (1-p) \sum_{i=1}^\infty \sigma_i (-1) + (1-p)
\sigma_\infty 0
\]
\[= p (\sum_{i=1}^\infty \sigma_i + \sigma_\infty) + \sum_{i=1}^N
\sigma_i (pA-1) + \sum_{i=N+1}^\infty \sigma_i (p-1) \geq
p-\epsilon
\]
where the last inequality holds since $pA-1>0$ and
$\sum_{i=N+1}^\infty \sigma_i < \epsilon$.

Therefore $\overline{val}(G)\geq p$.
\end{proof}

\begin{claim}
The lower value of this game is $p-\frac{1-p}{A}$.
\end{claim}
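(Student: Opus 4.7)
The plan is to bracket $\underline{val}(G)$ by $p - \tfrac{1-p}{A}$ from both sides. For the lower bound I would exhibit an explicit Maximizer strategy $\tau^*$: if $b=0$ never stop, and if $b=1$ stop at stage $1$ with probability $\alpha$ (otherwise never stop), for an $\alpha$ to be determined. Because $\tau^*$ does nothing after stage $1$, the Minimizer's only qualitatively distinct pure responses are ``never stop'' (worth $p\alpha$) and ``stop at some even stage'' (worth $p\alpha + p(1-\alpha)A - (1-p)$, since when $b=1$ the game either ends already at stage $1$ with probability $\alpha$ or reaches the Minimizer's payoff row, and when $b=0$ it always reaches her). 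Setting these two values equal forces $\alpha = 1 - \tfrac{1-p}{pA}$ and common value $p - \tfrac{1-p}{A}$; linearity then handles mixed Minimizer responses, completing the $\geq$ direction.

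For the matching upper bound, I would observe that against the Minimizer any Maximizer strategy $\tau$ is summarized by just two numbers --- the total conditional stopping probabilities $P_0, P_1 \in [0,1]$ --- together with their truncations $P_b^j$ (the probability $\tau$ stops within the first $j$ odd stages, given bit $b$). The Minimizer's ``never stop'' response then has value $pP_1 - (1-p)P_0$, and her ``stop at stage $2j$'' response has value $-(1-p) + pA - p(A-1)P_1^j$, which tends to $-(1-p) + pA - p(A-1)P_1$ as $j \to \infty$. Hence $\inf_\sigma \rho(\tau,\sigma)$ is at most the minimum of these two; discarding the favorable $-(1-p)P_0$ term reduces the problem to the one-variable min-max $\max_{P_1 \in [0,1]} \min\{pP_1,\ -(1-p)+pA - p(A-1)P_1\}$, whose optimum is exactly the same equalizing value $p - \tfrac{1-p}{A}$, attained at the same $P_1 = 1 - \tfrac{1-p}{pA}$ as in the lower bound.

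The one delicate point I anticipate is that when $\tau$ spreads its stopping mass over arbitrarily late stages, the ``stop at $2j$'' value is only approached, never exactly attained at a fixed finite $j$. This is harmless for upper-bounding a supremum of infima: given any $\epsilon > 0$ I can pick $j$ with $P_1^j > P_1 - \epsilon$, which yields a Minimizer value within $O(\epsilon)$ of the target, and $\epsilon$ is arbitrary. With that subtlety dispatched, the entire argument reduces to the two-point indifference computation of the first paragraph, and no earlier result from the paper need be invoked.
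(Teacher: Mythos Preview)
Your proof is correct and follows essentially the same route as the paper: the same equalizing Maximizer strategy (stop at stage~1 with probability $1-\tfrac{1-p}{pA}$ when $b=1$, never stop when $b=0$) establishes the lower bound, and the same two Minimizer test responses (never stop; stop at a late even stage) pin down the upper bound. The only cosmetic difference is that the paper phrases the upper bound as a contradiction---assuming $val(\tau)>p-\tfrac{1-p}{A}$, forcing $P_1>1-\tfrac{1-p}{pA}$ via ``never stop'', then picking $N$ with $P_1^N$ above that threshold to get a strict contradiction from ``stop at $N$''---whereas you compute the one-variable max--min directly; the underlying computation is identical.
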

\begin{proof}
Let the Maximizer play the following strategy: If $b=1$ stop at
time 1 with probability $1-\frac{1-p}{Ap}$ and continue otherwise.
If the Minimizer never decides to stop the payoff will be
$p(1-\frac{1-p}{Ap})1+(1-p)0=p-\frac{1-p}{A}$. If the Minimizer
decides to stop at any stage, the payoff will be
$p(1-\frac{1-p}{Ap})1 + p\frac{1-p}{Ap}A +
(1-p)(-1)=p-\frac{1-p}{A}$. Clearly any mix of these pure
strategies will also result in payoff of exactly
$p-\frac{1-p}{A}$.

To see that the Maximizer cannot guarantee more assume to the
contrary that there exist a strategy $\sigma$ for the Maximizer
with $val(\sigma)>p-\frac{1-p}{A}$. This strategy consists of the
probabilities $\{\sigma^0_i\}_{i=1}^\infty$ of stopping at stage
$i$ if $b=0$, and $\{\sigma^1_i\}_{i=1}^\infty$ if $b=1$.

By our assumption, the payoff against any strategy for the
Minimizer should be more than $p-\frac{1-p}{A}$. Let the Minimizer
always choose to continue. The expected payoff in that case is
\[
p(\sum_{i=1}^\infty \sigma^1_i)1 + (1-p)(\sum_{i=1}^\infty
\sigma^0_i)(-1)>p-\frac{1-p}{A} ,
\]
which implies
\[
\sum_{i=1}^\infty \sigma^1_i > 1 - \frac{1-p}{Ap} .
\]
Let $N$ be sufficiently large such that $\sum_{i=1}^N \sigma^1_i >
1 - \frac{1-p}{Ap}$. Consider the following strategy for the
Minimizer: continue until stage $N$ and then stop. The payoff will
be
\[
p(\sum_{i=1}^N \sigma^1_i)1 + p(1-\sum_{i=1}^N \sigma^1_i)A +
(1-p)(-1)
\]
\[
=p + p(1-\sum_{i=1}^N \sigma^1_i)(A-1) + (1-p)(-1)
\]
\[
<p+p\frac{1-p}{Ap}(A-1)+p-1=p-\frac{1-p}{A} ,
\]
a contradiction.
\end{proof}

\end{document}